\definecolor{steelblue}{RGB}{70,130,180}
\definecolor{warmred}{RGB}{244, 104, 65}
\newtheorem{remark}{Remark}
\newtheorem{theorem}{Theorem}
\newtheorem{corollary}{Corollary}
\begin{document}
\title{Stochastic Dynamic Programming for Wind Farm Power Maximization}
\author{Yi Guo, \quad
        Mario Rotea, \quad
          Tyler Summers
    \thanks{This material is based on work supported by the National Science Foundation under grant CMMI-1728605. {This project was partially funded by The University of Texas at Dallas Office of Research through the SCI program}.}
    \thanks{Y. Guo, M. Rotea and T. Summers are with the Department of Mechanical Engineering, The University of Texas at Dallas, Richardson, TX, 75080, USA, email: \{yi.guo2, rotea, tyler.summers\}@utdallas.edu.}
}
\maketitle

\begin{abstract}
Wind plants can increase annual energy production with advanced control algorithms by coordinating the operating points of individual turbine controllers across the farm. It remains a challenge to achieve performance improvements in practice because of the difficulty of utilizing models that capture pertinent complex aerodynamic phenomena while remaining amenable to control design. We formulate a multi-stage stochastic optimal control problem for wind farm power maximization and show that it can be solved analytically via dynamic programming. In particular, our model incorporates state- and input-dependent multiplicative noise whose distributions capture stochastic wind fluctuations. The optimal control policies and value functions explicitly incorporate the moments of these distributions, establishing a connection between wind flow data and optimal feedback control. We illustrate the results with numerical experiments.
\end{abstract}

\section{Introduction}
Wind energy is an important component of future energy systems to meet growing energy demands. As wind power continues to account for a larger portion of the world-wide energy portfolio, the optimal operation of wind farms offers both challenges and opportunities to further improve performance at the levels of single turbines, wind farms, and power grids. Due to nonlinear aerodynamic interaction through wakes and unpredictable wind variations, future optimal control strategies for wind farms will require sophisticated models to capture and manage \emph{stochastic} wind fluctuations.

Maximizing the wind power capture has been discussed in the scope of wind turbines \cite{pao2009tutorial,xiao2018cart3,pao2011control,munteanu2008optimal} and wind farms \cite{schepers2007improved,johnson2009wind,knudsen2009distributed,spudic2010hierarchical,madjidian2011distributed,soleimanzadeh2011controller,kristalny2011decentralized,horvat2012quasi,biegel2013distributed,bitar2013coordinated,buccafusca2018maximizing,buccafusca2017modeling,ciri2017model,santhanagopalan2018performance,gebraad2016wind,goit2015optimal,gebraad2015maximum,rotea2014dynamic,johnson2012assessment,yang2013maximizing,marden2013model,park2013wind}. In Region 2 operation (below-rated wind speed), the wind plant is operated to maximize the power output. In this regime, there are inherent tradeoffs between the wake of upstream turbines and the power extracted from downstream turbines. Due to this aerodynamic coupling, maximizing total power of wind farms cannot be achieved by myopically maximizing the power output for each individual wind turbine in the array \cite{steinbuch1988optimal}. Therefore, depending on layout and wind conditions, it may be essential to have a coordinated control framework for wind farms to determine the optimal control strategy for each wind turbine to improve annual energy production.

Many challenges and related solutions for wind farm power maximization have been highlighted and discussed in \cite{pao2009tutorial}. Recent control strategies for optimal operation have been proposed using  both model-based \cite{bitar2013coordinated,gebraad2016wind,goit2015optimal,rotea2014dynamic,johnson2009wind,knudsen2009distributed,spudic2010hierarchical,madjidian2011distributed,soleimanzadeh2011controller,kristalny2011decentralized,horvat2012quasi,biegel2013distributed}, and model-free strategies \cite{ciri2017model,marden2013model,park2013wind,gebraad2015maximum,yang2013maximizing}. 
Model-based strategies provide solutions that typically have faster response times than model-free approaches. However, the models used for control design can deviate from actual wind field and turbine characteristics in practice, which can limit the effectiveness of model-based control strategies. The reader is referred to the introduction in  \cite{ciri2017model}, and the references therein, for further discussion on model-based and model-free strategies for wind plant power maximization.

In this paper, we focus on wind power maximization in Region 2. The work presented here generalizes the simple actuator disk model (ADM) utilized in \cite{rotea2014dynamic} to a stochastic version and pose a multi-stage stochastic optimal control problem for wind farm power maximization. The stochastic actuator disk model balances complexity and tractability by incorporating unsteady aerodynamic phenomena into the distributions of random variables in the model. Estimates of the statistics of these distributions can then be exploited in the control algorithm to improve overall efficiency of the farm in the presence of stochastic wind flow.

Our main contributions are as follows:
        \begin{itemize}
        \item[-] We formulate a multi-stage stochastic optimal control problem for wind farm power maximization and show that it can be analytically solved via dynamic programming. In particular, our model generalizes that of \cite{rotea2014dynamic} by incorporating state- and input-dependent multiplicative noises to capture the uncertain wake effects of wind turbines. The stochastic version of the ADM relaxes a strong assumption of a deterministic ADM, such as steady wind over the rotor disk. In contrast to existing work, the proposed stochastic multi-stage formulation allows us to maximize the wind farm power by explicitly incorporating information about the probability distributions of wind fluctuations into control decisions.
        
        \item[-] By solving the proposed multi-stage stochastic optimization, we show that the optimal feedback control policies for the turbines are linear with respect to upstream wind velocity, but in contrast to \cite{rotea2014dynamic}, the optimal gain coefficients depend explicitly on the statistics of the multiplicative noises, which can be estimated from high-fidelity wind flow simulations or experimental data. This provides a direct connection between statistical properties of the unsteady wind flow physics and the optimal feedback control of wind farms. We also show that for the stochastic ADM with both multiplicative and additive noise, the optimal policies are nonlinear. 
        
\end{itemize}

The framework, while elementary for real-world applications, illustrates a rigorous process for incorporating flow statistics into the wind farm power maximization problem. The dependence of control solutions on the statistics of the wind fluctuations makes intuitive sense, as one cannot expect a single control algorithm to be optimal under a range of unsteady wind conditions. In future work, we will extend the stochastic approach presented in this paper to more representative, yet tractable, models of the flow physics and loads as done in~\cite{santhanagopalan2018performance}.

\section{Problem Formulation}
Our model is a generalization of the one in \cite{rotea2014dynamic}, which utilizes the actuator disk model (ADM) \cite{burton2001handbook,manwell2010wind}. Let $P$ denote the power extracted by an ideal turbine rotor, let $F$ denote the force done by the wind on the rotor, let $V_0$ denote the free stream upwind velocity, let $V$ denote the wind velocity at the disk, and let $V_1$ denote the far wake velocity. The ADM model is then
\begin{subequations}\label{physicalModels}
\begin{equation}
P=FV,\label{Power_def}
\end{equation}
\begin{equation}
F = \rho A(V_0-V_1)V,\label{forces_def}
\end{equation}
\begin{equation}
V = V_0 - u, \label{y_def}
\end{equation}
\begin{equation}
V_1 = V_0 - 2u, \label{xk1_def}
\end{equation}
\end{subequations}
where $\rho$ is the air density, $A$ is the rotor swept area, and $u\geq 0$ is the reduction in air velocity between the free stream and the rotor plane, which can be interpreted as a control input. In practice, $u$ can be controlled by adjusting the angular rotor speed or the collective blade pitch angle.

\begin{figure}[!htbp]
\centering
\includegraphics[scale=1.3]{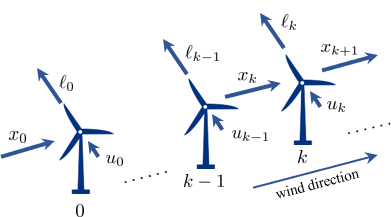}
\caption{A cascade of $N$ wind turbines; $k=0$ indicates the most upstream location.}
\label{fig:wind_farm_array}
\end{figure}

\textbf{Deterministic Model:} We consider a one-dimensional cascade of wind turbines, illustrated in Fig.~1. We assume that the wind direction is along the row of turbines and is not varying. The ADM model given in \eqref{physicalModels} can be written in state-space form by letting $x_k$ and $x_{k+1}$ denote the wind velocity upstream and downstream of the $k$-th turbine (i.e., $x_k = V_k$ in \eqref{xk1_def}, for $k=0,1$). The scalar control input for the $k$-th turbine is denoted by $u_k$, which is the controllable wind velocity deficit at the rotor disk, and $y_k$ is an output to estimate the power extraction of turbine $k$ (i.e., $y_k = V_k - u_k$ in \eqref{y_def}). Then the velocity $V_{k+1}$ in the far wake of the rotor \eqref{xk1_def} and the rotor effect at the disk in velocity \eqref{y_def} can be written as below in \eqref{SS_Velocity} and \eqref{SS_output}. The power extraction of the $k$-th wind turbine using ADM model \eqref{physicalModels} in state-space expression is given in \eqref{SS_power}
        \begin{subequations}
        \begin{align}
            x_{k+1} & = x_k - 2u_k, \label{SS_Velocity}\\
            y_k & = x_k - u_k, \label{SS_output}\\
            p_k(y_k, u_k) & = 2\rho A y_k^2 u_k, \label{SS_power}
        \end{align}
        \end{subequations}
        where the control  input  is  constrained  by $u_k\in[0,\frac{1}{2}x_k]$ so that the wind velocity in the far wake remains positive.  To simplify the notation, we eliminate the constant in \eqref{SS_power} and come to the constant-free turbine power function $\ell(x_k, u_k)$, which will serve as a stage cost in our subsequent multi-stage optimal control problem
        \begin{equation} \label{SS_togofunctions}
        \ell(x_k, u_k) = (x_k - u_k)^2 u_k.
        \end{equation}
        Note that this function is jointly \emph{cubic} in the state and control input. Further details of this model may be found in \cite{rotea2014dynamic}.

\textbf{Stochastic Model:}
The simple model described above captures basic wind farm turbine interactions. But it fails to capture stochastic wind fluctuations that are also relevant to optimizing the total power output. High fidelity computational fluid dynamic models offer extreme detail of flows but are cumbersome to incorporate into high-level operational decision making. Therefore, we consider here a stochastic extension of the deterministic actuator disk model above that can capture more complex phenomena, such as stochastic wind fluctuations, while remaining computationally tractable.

\begin{figure}[!htbp]
\centering
\includegraphics[scale=0.55]{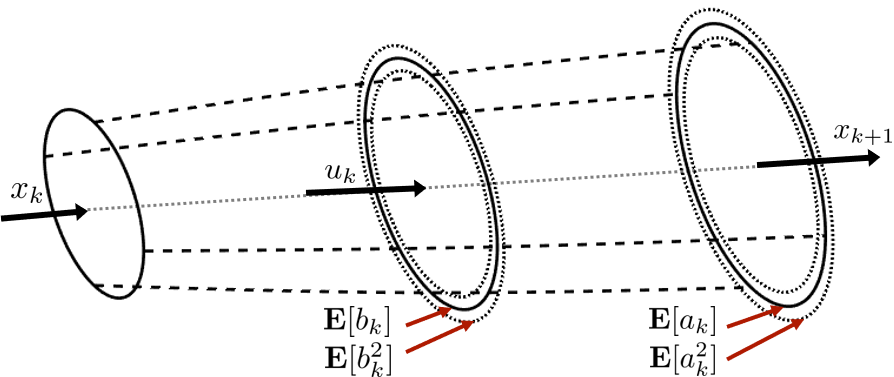}
\caption{Stochastic actuator disk model and stream-tube diagram for wind power extraction. The solid and dashed lines indicate the wind field mean and associated stochastic variations, respectively, which relate to the moments of the multiplicative variations parameters $a_k$ and $b_k$.}
\label{fig:stochasticADM}
\end{figure}

\begin{figure*}[!htbp]
\hrulefill
\begin{subequations}
\begin{align}
    \psi_k & = -\frac{3Q_{k+1}\Sigma_{b,k}\mu_{a,k} -2  + \sqrt{(3Q_{k+1}\Sigma_{b,k}\mu_{a,k} -2 )^2 - 3(Q_{k+1}\Gamma_{b,k}+1)(3Q_{k+1}\Sigma_{a,k}\mu_{b,k}+1)}} {3(Q_{k+1}\Gamma_{b,k}+1)},\label{optimal_feedback_policies_raw}\\
        Q_k & = (1-\psi_k)^2\psi_k + Q_{k+1}\left(\Gamma_{a,k} + \Gamma_{b,k}\psi_k^3 + 3\Sigma_{b,k}\mu_{a,k}\psi_k^2 + 3\Sigma_{a,k}\mu_{b,k}\psi_k \right).\label{back_recursion_raw}
\end{align}
\end{subequations}
\hrulefill
\end{figure*}

The stochastic actuator disk model is given by
\begin{equation} \label{stochmodel}
x_{k+1} = a_k x_k + b_k u_k,
\end{equation}
where $a_k \sim \mathcal{P}_{a,k}$ is a state multiplicative random variable and $b_k \sim \mathcal{P}_{b,k}$ is an input multiplicative random variable. 
The model is illustrated in Fig.~\ref{fig:stochasticADM}. We assume that the random variables $a_k$ and $b_k$ are independent for all $k$ and independent of each other.
This model captures stochastic wind fluctuations. In particular, the multiplicative noises $a_k$ and $b_k$ provide a simple model for the inherent stochasticity of far wake recovery. We assume that moments up to order three of each of the distributions $\mathcal{P}_{a,k}, \mathcal{P}_{b,k}$ are known (or can be estimated from high-fidelity simulation or experimental data). For the state mean dynamics to match the deterministic model \eqref{SS_Velocity}, we can set $\mathbf{E} [a_k] = 1$, $\mathbf{E} [b_k] = -2$. 

\section{Stochastic Optimal Control for Wind Power Maximization}
The objective of the operator is to select control inputs $u_0,...,u_{N-1}$ to maximize the aggregate power of the wind turbine cascade given by the sum of \eqref{SS_togofunctions} over all turbines. However, since in the stochastic model the states (and therefore the power outputs) are random variables, we maximize the \emph{expected} aggregate power and search for closed-loop feedback control policies that specify control inputs as a function of the state $x_k$. In particular, we seek to solve the multi-stage stochastic optimal control problem
\begin{equation} \label{stochcost}
\max_{\pi_0, ..., \pi_{N-1}} \quad \mathbf{E} \sum_{k=0}^{N-1} (x_k - u_k)^2 u_k,
\end{equation}
where the decision variables $\pi_k(\cdot)$ are the control policies (i.e., $u_k = \pi_k(x_k)$), and the expectation is taken with respect to the random variable sequences $a_k$, $b_k$. As in \cite{rotea2014dynamic}, we will show that the optimal policies are linear and the optimal value functions are \emph{cubic}. In contrast to \cite{rotea2014dynamic}, the parameters of both the optimal policies and value functions depend on the moments of the distribution of the random variables in the model. We have the following main result.

\begin{theorem}
Consider the wind farm power maximization problem for a cascade of $N$ identical turbines modeled with the stochastic actuator disk model \eqref{stochmodel}, \eqref{stochcost}. Let $x_0$ denote the free stream velocity entering the cascade. The distributions of $a_k$ and $b_k$ are described by their raw moments up to third order, namely their
means $\mu_{a,k}$, $\mu_{b,k}$, second (raw) moments $\Sigma_{a,k}$, $\Sigma_{b,k}$ and third (raw) moments $\Gamma_{a,k}$, $\Gamma_{b,k}$. Under these assumptions, the optimal feedback control policies are linear in the state and given by
\begin{equation}\label{optimal_linear_policies_theorem}
u_k^* = \pi^*(x_k) =  \psi_k x_k, \quad k = 1,...,N-1,
\end{equation}
where the gain coefficients $\psi_k$ are given in \eqref{optimal_feedback_policies_raw}
and the backwards recursion \eqref{back_recursion_raw} for $k = N-1,...,0$ with initialization $Q_N = 0$. The maximum power produced by the wind farm as a function of initial upstream wind velocity is given by
\begin{equation}\label{maximum_wind_farm_power}
P_0^*(x_0) = 2\rho A Q_0 x_0^3,
\end{equation}
where $\rho$ is the air density, $A$ is the rotor swept area, and $Q_0$ is the initial value of the backwards recursion \eqref{back_recursion_raw} with $Q_N = 0$.
\end{theorem}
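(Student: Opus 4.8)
The plan is to solve the finite-horizon problem \eqref{stochcost} by backward dynamic programming and to verify, by induction on the stage index $k$, that the optimal cost-to-go functions are homogeneous cubics on the feasible state space $\{x \ge 0\}$. First I would set up the Bellman recursion: let $V_N \equiv 0$ and, for $k = N-1,\dots,0$,
\begin{equation*}
V_k(x) = \max_{u \in [0,\, x/2]} \Big\{ (x-u)^2 u + \mathbf{E}\big[ V_{k+1}(a_k x + b_k u) \big] \Big\},
\end{equation*}
with the induction hypothesis $V_{k+1}(x) = Q_{k+1} x^3$ for $x \ge 0$, which holds at $k+1 = N$ since $Q_N = 0$. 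Substituting $x_{k+1} = a_k x + b_k u$, expanding the cube, and using independence of $a_k,b_k$ together with the moment notation of the theorem, I get
\begin{equation*}
\mathbf{E}\big[(a_k x + b_k u)^3\big] = \Gamma_{a,k} x^3 + 3\Sigma_{a,k}\mu_{b,k}\, x^2 u + 3\mu_{a,k}\Sigma_{b,k}\, x u^2 + \Gamma_{b,k}\, u^3 .
\end{equation*}

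Next I would exploit homogeneity. On $x \ge 0$, the substitution $u = \psi x$ with $\psi \in [0,1/2]$ turns the bracket into $x^3 g_k(\psi)$, where
\begin{equation*}
g_k(\psi) = (1-\psi)^2 \psi + Q_{k+1}\big(\Gamma_{a,k} + 3\Sigma_{a,k}\mu_{b,k}\,\psi + 3\mu_{a,k}\Sigma_{b,k}\,\psi^2 + \Gamma_{b,k}\,\psi^3\big),
\end{equation*}
so $V_k(x) = x^3 \max_{\psi \in [0,1/2]} g_k(\psi)$ is already cubic in $x$, which closes the induction once the maximizer is pinned down. Since $g_k$ is a cubic polynomial in $\psi$, its stationary points solve the quadratic $g_k'(\psi) = 0$, i.e.
\begin{equation*}
3(Q_{k+1}\Gamma_{b,k}+1)\psi^2 + 2(3 Q_{k+1}\Sigma_{b,k}\mu_{a,k} - 2)\,\psi + (3 Q_{k+1}\Sigma_{a,k}\mu_{b,k}+1) = 0,
\end{equation*}
and the quadratic formula produces exactly the two candidates appearing in \eqref{optimal_feedback_policies_raw}. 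I would then argue that the branch displayed there — the one with the minus sign before the square root — is the correct one: it is the stationary point lying in $[0,1/2]$ at which $g_k$ attains its maximum over the feasible interval, the other root being a local minimizer and the endpoints $\psi = 0, 1/2$ not improving on it. Setting $Q_k := g_k(\psi_k)$ then reproduces the recursion \eqref{back_recursion_raw}, giving $V_k(x) = Q_k x^3$ and the linear optimal policy $u_k^* = \psi_k x_k$, completing the induction. Finally, restoring the constant $2\rho A$ that was dropped in passing from \eqref{SS_power} to \eqref{SS_togofunctions} yields the farm power $P_0^*(x_0) = 2\rho A\, Q_0 x_0^3$, i.e.\ \eqref{maximum_wind_farm_power}.

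The main obstacle I anticipate is the constrained-optimization bookkeeping in the step that selects $\psi_k$: one must check that the discriminant in \eqref{optimal_feedback_policies_raw} is nonnegative, that $Q_{k+1}\Gamma_{b,k}+1 \neq 0$ so the expression is well defined, and that the chosen critical point is genuinely feasible ($0 \le \psi_k \le 1/2$) and a \emph{global} maximizer of $g_k$ on $[0,1/2]$ rather than the competing root or a boundary point — all of which may require mild sign conditions on $Q_{k+1}$ and on the third moments that are inherited, stage by stage, from the structure of the recursion. The base case $\psi_{N-1} = 1/3$, $Q_{N-1} = 4/27$ (obtained by setting $Q_N = 0$) is a useful sanity check, since it recovers the deterministic result of \cite{rotea2014dynamic}. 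A secondary point worth a remark is that the state must remain nonnegative along the optimal trajectory, i.e.\ $a_k + b_k \psi_k \ge 0$ on the support of the noise, for both the feasible set $[0,x/2]$ and the cubic value-function ansatz to remain valid from one stage to the next.
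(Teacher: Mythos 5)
Your proposal is correct and follows essentially the same route as the paper's proof: backward dynamic programming with the cubic value-function ansatz $G_k^*(x)=Q_k x^3$, expansion of $\mathbf{E}[(a_kx+b_ku)^3]$ via the raw moments, a quadratic first-order condition whose appropriate root gives the linear policy \eqref{optimal_feedback_policies_raw}, and substitution back to obtain \eqref{back_recursion_raw} and \eqref{maximum_wind_farm_power}. Your explicit homogeneity substitution $u=\psi x$ and your attention to root selection, the discriminant, and feasibility of $\psi_k\in[0,1/2]$ are only presentational refinements of (and in fact more careful than) the paper's argument, which simply asserts that one root is the unique feasible maximizer.
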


\begin{proof}
The dynamic programming algorithm \cite{bellman1954theory,bertsekas2005dynamic} for solving stochastic optimal control problems is given by the recursion
\begin{equation} \label{DP_algorithm}
\begin{aligned}
    G_k^*(x_k) &= \max_{u_k \in [0, \frac{1}{2}x_k]} \mathbf{E}\left\{ \ell(x_k, u_k) + G_{k+1}^*\left(x_{k+1})\right)  \right\}, \\
    \pi^*(x_k) &= \arg \max_{u_k \in [0, \frac{1}{2}x_k]} \mathbf{E}\left\{ \ell(x_k, u_k) + G_{k+1}^*\left(x_{k+1})\right)  \right\},
\end{aligned}
\end{equation}
where $G_k^*(x_k)$ represents the optimal (normalized) wind farm power from turbine $k$ as a function of the state $x_k$, with initialization $G_N^*(x_k) = 0$. We first solve the last tail sub-problem at $k=N-1$ with $G_N^*(x) = 0$. We have
\begin{equation}\nonumber
\frac{\partial \ell(x_{N-1}, u_{N-1})}{\partial u_{N-1}} = (x_{N-1} - u_{N_1})(x_{N-1} - 3u_{N-1}) = 0,
\end{equation}
for which the policy $u_{N-1}^* = \frac{1}{3}x_{N-1}$ is the unique maximizer and satisfies the constraint $u_{N-1} \in [0,\frac{1}{2}x_{N-1}]$. Substituting this optimal policy back into the value expression yields the optimal power function
\begin{equation*}
    G_{N-1}^*(x_{N-1}) = \frac{4}{27}x_{N-1}^3.
\end{equation*}
Note that this function is a cubic in the state. Accordingly, we parameterize the optimal power functions as $G_{k}^*(x_{k}) = Q_{k}x_{k}^3$ and consider a general step in the backward recursion. To obtain the optimal policy, we define the function inside the maximization operation
\begin{equation}\label{cost-to-go function}
    G_k(x_k,u_k):= (x_k - u_k)^2u_k + Q_{k+1}\mathbf{E}\left[(a_kx_k + b_ku_k)^3\right].
\end{equation}
Expanding the second term and taking the expectation by utilizing the (raw) moment information from the distributions of $a_k$ and $b_k$, and then taking the partial derivative of $G_k(x_k,u_k)$ with respect to $u_k$ gives a quadratic polynomial in $u_k$. As above, one of the roots of this polynomial corresponds to the unique maximizing input, which is a \emph{linear} function of the state. Carrying out the algebra yields
\begin{equation}\label{state_feedback_control_policies}
    \begin{aligned}
        u_k^* = \pi^*(x_k) = \psi_k x_k,
    \end{aligned}
\end{equation}
where the gain parameters $\psi_k$ are given in \eqref{optimal_feedback_policies_raw}. Note that the optimal policies all satisfy the constraints on $u_k$. To obtain a backwards recursion for the value function coefficients $Q_{k}$, we substitute $u_k^* = \psi_kx_k$ back into \eqref{cost-to-go function}
\begin{equation}
\begin{aligned}\label{optimal_recursion}
    G_k^*(x_k,u_k^*) = & ~ Q_{k}x_k^3\\
    = & (x_k - u_k^*)^2u_k^* + Q_{k+1}\mathbf{E}\left[(a_kx_k + b_ku_k^*)^3\right].
    \end{aligned}
\end{equation}
Since $u_k^*$ is linear in $x_k$, the optimal value functions are cubic in the state.  Matching the coefficients on both sides of \eqref{optimal_recursion}, we come to \eqref{back_recursion_raw}. \color{black}Eq. \eqref{maximum_wind_farm_power} follows from \eqref{optimal_recursion} for $k=0$, \eqref{stochcost} and \eqref{SS_power}, which concludes the proof.\color{black}
\end{proof}

\begin{figure*}[!htbp]
\hrulefill
\begin{equation}\label{nonlinear_control_policies}
\begin{aligned}
    &\pi_{N-2}^*(x) = -\frac{\Delta_k  + \sqrt{\Delta_{N-2}^2 - 3(Q_{N-1}\Gamma_{b,N-2}+1)\big[\left(3Q_{N-1}\Sigma_{a,N-2}\mu_{b,N-2}+1\right)x^2 + 3Q_{N-1}\Sigma_{c,N-2}\mu_{b,N-2}\big]}} {3(Q_{N-1}\Gamma_{b,N-2}+1)},\\
    & \textrm{where,}\quad\Delta_{N-2} = (3Q_{N-1}\Sigma_{b,N-2}\mu_{a,N-2} - 2) x.
\end{aligned}
\end{equation}
\hrulefill
\end{figure*}
\begin{remark} (Optimal policies and value functions with central moments.) The random variables $a_k$ and $b_k$ can also be described by their higher-order central moments, namely their variances $\sigma_{a,k}^2$, $\sigma_{b,k}^2$ and skewnesses $\gamma_{a,k}$, $\gamma_{b,k}$. The optimal linear state feedback control policies can also be written in terms of central moments instead of raw moments by using
\begin{equation} \label{raw_central_moments}
\Sigma = \sigma^2 + \mu^2, \quad \Gamma = \sigma^3\gamma + 3\sigma^2\mu + \mu^3.
\end{equation}
\end{remark} 
\begin{corollary}
Under the assumptions of Theorem 1, we define the efficiency $\eta_\ell$ of the $\ell$-th sub-array\footnote{The efficiency $\eta_\ell$ defined here quantifies the energy extraction of sub-array $\ell$ compared to energy in the wind entering the sub-array. Note that due to aerodynamic wake coupling, it is possible for the optimal efficiency of the sub-array to exceed the efficiency obtained by independently setting individual turbine induction factors to achieve the single-turbine Betz limits.} by
\begin{equation}\label{definition of efficiency}
\eta_\ell := \mathbf{E}\left[ \frac{P_\ell}{\frac{1}{2}\rho Ax_\ell^3} \right],
\end{equation}
where $x_\ell$ is the free stream velocity entering the subarray cascaded turbines from $\ell$ to $N-1$ and $P_{\ell}$ denotes the aggregated power from the $\ell$-th subarray of wind turbines. The optimal efficiency $\eta_\ell^*$ of the $l$-th sub-array has the form
\begin{equation}\label{subarray_optimal_efficiency}
\eta_\ell^* = 4Q_\ell, \quad \forall \ell \in \{0,\ldots, N-1\},
\end{equation}
which is achieved with the optimal control sequence $u_\ell^*,\ldots,u_{N-1}^*$
, where $Q_\ell$ is calculated from \eqref{back_recursion_raw}.
\end{corollary}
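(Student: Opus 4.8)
The plan is to read the claimed identity \eqref{subarray_optimal_efficiency} directly off the dynamic-programming value function constructed in the proof of Theorem~1, after restoring the physical constant that was dropped between \eqref{SS_power} and \eqref{SS_togofunctions}. The crucial preliminary observation is that the backwards recursion \eqref{back_recursion_raw} is initialized at $Q_N = 0$ and run backwards, so each coefficient $Q_\ell$ depends only on the moments $\mu_{a,k},\mu_{b,k},\Sigma_{a,k},\Sigma_{b,k},\Gamma_{a,k},\Gamma_{b,k}$ at stages $k=\ell,\ldots,N-1$. Hence the tail value function $G_\ell^*(x_\ell) = Q_\ell x_\ell^3$ from the proof of Theorem~1 is exactly the optimal value of problem \eqref{stochcost} restricted to the cascade of turbines $\ell$ through $N-1$ fed with free-stream input $x_\ell$ --- this is just the principle of optimality, and the restriction of the globally optimal policy to stages $\ell,\ldots,N-1$ is optimal for this tail sub-problem, with the optimal inputs $u_\ell^*,\ldots,u_{N-1}^*$ given by \eqref{optimal_feedback_policies_raw}.

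Next I would restore the constant: since $p_k = 2\rho A\,\ell(x_k,u_k)$ by \eqref{SS_power}--\eqref{SS_togofunctions}, the maximum expected aggregate power of the $\ell$-th sub-array is $\mathbf{E}[P_\ell^* \mid x_\ell] = 2\rho A\,G_\ell^*(x_\ell) = 2\rho A Q_\ell x_\ell^3$, exactly as in \eqref{maximum_wind_farm_power} for $\ell = 0$. Substituting into the definition \eqref{definition of efficiency}, the deterministic denominator $\tfrac12\rho A x_\ell^3$ factors out of the expectation and
\[
\eta_\ell^* = \frac{\mathbf{E}[P_\ell^*\mid x_\ell]}{\tfrac12\rho A x_\ell^3} = \frac{2\rho A Q_\ell x_\ell^3}{\tfrac12\rho A x_\ell^3} = 4Q_\ell .
\]
I would also note that the result is insensitive to whether $x_\ell$ is treated as a deterministic free-stream input (as in Theorem~1) or as the random velocity produced by the upstream turbines: in the latter case one conditions on $x_\ell$, uses $\mathbf{E}[P_\ell^*\mid x_\ell] = 2\rho A Q_\ell x_\ell^3$ (valid because the $a_k,b_k$ are independent across stages), and then takes the outer expectation, which leaves $4Q_\ell$ unchanged since $Q_\ell$ is a constant.

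The only step that genuinely requires care --- and the one I expect to be the main obstacle --- is establishing that the sub-array inherits precisely the coefficient $Q_\ell$ of the full recursion, i.e., that the value function is ``stage-local'' in the sense guaranteed by dynamic programming; everything else is bookkeeping of the $2\rho A$ versus $\tfrac12\rho A$ factors. The parenthetical comparison with the single-turbine Betz limit in the footnote is interpretive rather than part of the proof: for $\ell = N-1$ one has $Q_{N-1} = 4/27$, so $\eta_{N-1}^* = 16/27$, and larger sub-arrays can have $4Q_\ell > 16/27$ because downstream turbines extract power from already-decelerated wind while the normalization uses the original free-stream velocity.
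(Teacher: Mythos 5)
Your proposal is correct and follows essentially the same route as the paper: identify the optimal sub-array power as $P_\ell^* = 2\rho A Q_\ell x_\ell^3$ from the dynamic-programming value function $G_\ell^*(x_\ell)=Q_\ell x_\ell^3$ and substitute it into the efficiency definition to obtain $\eta_\ell^* = 4Q_\ell$. The extra care you take with the principle of optimality and with conditioning on $x_\ell$ is sound but is left implicit in the paper's shorter argument.
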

\begin{proof}
The maximum power produced by the $N-\ell$ turbines is
\begin{equation}\label{optimal_subarray_power}
    P_{\ell}^* = 2\rho AQ_{\ell}x_\ell^3,
\end{equation}
under the optimal control sequence $u_\ell^*,\ldots,u_{N-1}^*$ with $Q_\ell$ computed via \eqref{back_recursion_raw}. We substitute the optimal power \eqref{optimal_subarray_power} into \eqref{definition of efficiency} and obtain \eqref{subarray_optimal_efficiency}, which concludes the proof.
\end{proof}

Next, we consider a stochastic actuator disk model with both multiplicative and additive noise, which allows a more general description of uncertainty in wind fluctuations. Interestingly, in contrast to classical linear quadratic problems, when additive noise is included the optimal policies are no longer linear in general, and so the optimal value functions are no longer cubic. This highlights a computational limitation with this more general model that makes the approach more difficult to implement in practice.

\begin{theorem}(Stochastic actuator disk model with additive noise.) Consider the stochastic ADM \eqref{stochmodel} with additive noise
\begin{equation}\label{generalized_ADM}
    x_{k+1} = a_kx_k + b_ku_k + c_k,
\end{equation}
 where $c_k \sim \mathcal{P}_c$ is a zero-mean additive random variable with second moment $\Sigma_{c,k}$ and third moment $\Gamma_{c,k}$. In the penultimate tail subproblem, the optimal policy has the nonlinear form $$\pi_{N-2}^*(x) = \delta x + \sqrt{\alpha + \beta x^2}$$ for some constants $\delta$, $\alpha$, and $\beta$; the exact expression is given in \eqref{nonlinear_control_policies}. As a result, the corresponding optimal value function at turbine location $N-2$ is non-cubic, and so the remaining optimal policies and value functions are nonlinear and non-cubic, respectively. 
\end{theorem}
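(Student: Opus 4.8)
The plan is to run the dynamic programming recursion \eqref{DP_algorithm} exactly as in the proof of Theorem 1, but now propagating the expanded dynamics \eqref{generalized_ADM}, and to track precisely where the additive term $c_k$ destroys the homogeneous--cubic structure. First, since $G_N^*\equiv 0$, the stage $N-1$ subproblem never sees $x_N$ and hence never sees $c_{N-1}$; it reduces verbatim to the deterministic computation of Theorem 1, giving $u_{N-1}^*=\tfrac13 x_{N-1}$ and $G_{N-1}^*(x)=Q_{N-1}x^3$ with $Q_{N-1}=\tfrac{4}{27}$.

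Next, at stage $N-2$ I would form $G_{N-2}(x,u)=(x-u)^2u+Q_{N-1}\,\mathbf{E}\big[(a_{N-2}x+b_{N-2}u+c_{N-2})^3\big]$, expand the cube by the multinomial formula, and take the expectation using independence of $a_{N-2},b_{N-2},c_{N-2}$ and $\mathbf{E}[c_{N-2}]=0$. The point is that every term containing exactly one factor of $c_{N-2}$, together with the term $6\,\mathbf{E}[a_{N-2}b_{N-2}c_{N-2}]\,xu$, drops out, but two genuinely new terms survive relative to Theorem 1: a constant $Q_{N-1}\Gamma_{c,N-2}$ and a contribution $3Q_{N-1}\mu_{b,N-2}\Sigma_{c,N-2}\,u$ linear in $u$ (plus a harmless $3Q_{N-1}\mu_{a,N-2}\Sigma_{c,N-2}\,x$). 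Thus $G_{N-2}(x,u)$ is still a cubic in $u$, say $Au^3+\Delta_{N-2}u^2+E(x)u+D(x)$ with $A=1+Q_{N-1}\Gamma_{b,N-2}$, $\Delta_{N-2}=(3Q_{N-1}\Sigma_{b,N-2}\mu_{a,N-2}-2)x$, $E(x)=(1+3Q_{N-1}\Sigma_{a,N-2}\mu_{b,N-2})x^2+3Q_{N-1}\mu_{b,N-2}\Sigma_{c,N-2}$ and $D(x)=Q_{N-1}\Gamma_{a,N-2}x^3+3Q_{N-1}\mu_{a,N-2}\Sigma_{c,N-2}x+Q_{N-1}\Gamma_{c,N-2}$, but now $E$ has a nonzero $x$-independent part and $D$ is not homogeneous. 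Setting $\partial_u G_{N-2}=3Au^2+2\Delta_{N-2}u+E(x)=0$ and selecting the root that is the maximizer over $[0,\tfrac12 x]$ exactly as in Theorem 1 gives the stated closed form; since $\Delta_{N-2}$ is linear in $x$ and the discriminant $\Delta_{N-2}^2-3A\,E(x)$ is affine in $x^2$, this root is $\pi_{N-2}^*(x)=\delta x+\sqrt{\alpha+\beta x^2}$ for constants $\delta,\alpha,\beta$ read off from $A,\Delta_{N-2},E$, which is \eqref{nonlinear_control_policies}.

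To show that $G_{N-2}^*(x)=G_{N-2}\big(x,\pi_{N-2}^*(x)\big)$ is not of the form $Qx^3$ on the region where the optimal input is interior, I would use the first-order condition to eliminate $u^{*2}$ and $u^{*3}$, reducing $G_{N-2}^*(x)$ to $\big(\tfrac23 E(x)-\tfrac{2\Delta_{N-2}^2}{9A}\big)u^*(x)-\tfrac{\Delta_{N-2}E(x)}{9A}+D(x)$. The prefactor $\tfrac23 E(x)-\tfrac{2\Delta_{N-2}^2}{9A}$ is a quadratic in $x$ with nonzero $x$-independent part $2Q_{N-1}\mu_{b,N-2}\Sigma_{c,N-2}$ (nonzero because $\mu_{b,N-2}\ne 0$ and $\Sigma_{c,N-2}>0$ for a non-degenerate additive noise), while $u^*(x)=\delta x+\sqrt{\alpha+\beta x^2}$ carries the radical $\sqrt{\alpha+\beta x^2}$ with $\alpha\ne 0$. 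Hence $G_{N-2}^*$ is a polynomial plus a nonzero rational-function multiple of $\sqrt{\alpha+\beta x^2}$, which is irrational over $\mathbb{R}(x)$ when $\beta\ne 0$ — so $G_{N-2}^*$ is not a polynomial, in particular not a homogeneous cubic; in the degenerate subcase $\beta=0$ the radical is a nonzero constant and $G_{N-2}^*$ is a cubic with nonzero lower-order terms, which again fails to match the ansatz $Qx^3$.

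Finally I would propagate: by induction on $k=N-3,N-4,\dots,0$, the DP operator $G\mapsto\max_u\mathbf{E}[(x-u)^2u+G(a_kx+b_ku+c_k)]$ does not restore the homogeneous-cubic structure, since composing the algebraic, non-polynomial function $G_{k+1}^*$ with the affine map $x\mapsto a_kx+b_ku+c_k$ and integrating cannot yield a polynomial in $(x,u)$, so the stationarity condition is no longer polynomial and $\pi_k^*$ is nonlinear and $G_k^*$ non-cubic. I expect this last step to be the main obstacle: unlike the explicit finite computations of the previous steps, ruling out an accidental collapse back to a cubic at every remaining stage calls for a clean structural invariant — e.g.\ tracking that each $G_k^*$ retains a term with the square root of a nondegenerate quadratic in $x$ that survives both expectation and maximization — and formalizing this is more delicate than the penultimate-stage algebra. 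A secondary technical point, inherited directly from Theorem 1, is verifying that the chosen root of the quadratic lies in $[0,\tfrac12 x]$ over the relevant range of $x$, which requires a sign analysis of $A=1+Q_{N-1}\Gamma_{b,N-2}$ and of the discriminant.
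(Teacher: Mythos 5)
Your proposal follows essentially the same route as the paper's proof: solve the last tail subproblem as in Theorem 1, expand $\mathbf{E}[(a_kx+b_ku+c_k)^3]$ at stage $N-2$ using independence and $\mathbf{E}[c_k]=0$, and observe that the surviving terms $3Q_{N-1}\Sigma_{c,N-2}\mu_{b,N-2}u$ and $Q_{N-1}\Gamma_{c,N-2}$ turn the quadratic first-order condition into one with a nonzero $x$-independent part, yielding the root $\pi_{N-2}^*(x)=\delta x+\sqrt{\alpha+\beta x^2}$ that matches \eqref{nonlinear_control_policies}. Your additional argument that $G_{N-2}^*$ cannot be a homogeneous cubic (eliminating $u^{*2},u^{*3}$ via the stationarity condition and noting the surviving irrational radical) actually supplies a justification the paper only asserts, and your flagged concerns about the constraint verification and the backward propagation are points the paper likewise leaves informal.
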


\begin{proof}
Consider again the dynamic programming recursion \eqref{DP_algorithm}. Since $G_N^*(x) = 0$, the last tail subproblem is identical to that in Theorem 1, so that $G_{N-1}^*(x_{N-1}) = \frac{4}{27}x_{N-1}^3$. Consider now the penultimate tail subproblem for $k=N-2$
\begin{equation}\label{cost-to-go function_additive}
    G_{N-2}(x,u) = (x - u)^2 u + \frac{4}{27}\mathbf{E}\left[(a_k x + b_k u +c_k)^3\right].
\end{equation}
Taking the expectation of the second term by utilizing the (raw) moments of $a_k$, $b_k$ and $c_k$, and then taking the partial derivative with respect to $u$ and setting to zero yields a quadratic optimality condition in $u$. Carrying out some algebra as above, it turns out that the roots of this polynomial are no longer linear in the state, in contrast to the results in Theorem 1. The optimal control policy is thus a \emph{nonlinear} function of state of the form $\pi_{N-2}^*(x) = \delta x + \sqrt{\alpha + \beta x^2}$ for some constants $\delta$, $\alpha$, and $\beta$. The exact expression for the maximizing control input derived from the quadratic optimality condition is given in \eqref{nonlinear_control_policies}. It can also be seen that when the additive noise variance $\Sigma_{c,k}$ is zero (i.e., the additive noise is absent since it also has zero mean), then $\alpha = 0$ and we recover the linear policy of of Theorem 1 since $x\geq0$. Finally, these observations also lead to the conclusion that none of the remaining optimal policies and value functions are linear and cubic, respectively, and will in fact become increasingly complicated as the recursion proceeds backward toward the beginning of the array.
\end{proof}

\section{Numerical Experiments}
To illustrate our results, we consider a cascade with $N=10$ identical turbines to analyze the performance of the optimal gain sequence $\{\psi_0,\ldots,\psi_9\}$ for the proposed stochastic actuator disk model. As is commonly done in the literature \cite{burton2001handbook,manwell2010wind}, we refer to these gains as {\em induction factors}. The stochastic model parameters $a_k$ and $b_k$ are all independent of each other and spatially homogeneous ($\mu_{a,k} = \mu_a$, $\mu_{b,k} = \mu_b$, $\sigma_{a,k} = \sigma_{a}$, $\sigma_{b,k} = \sigma_b$, $\gamma_{a,k} = \gamma_a$ and $\gamma_{b,k} = \gamma_b$, $\forall k$)\footnote{To have clearer interpretation of our results, we discuss the results in the terms of central moments. No additive noise is considered in this section.}. 

\begin{figure}[!htbp]
\centering\label{fig:magnitude_vb}
\includegraphics[scale=0.38]{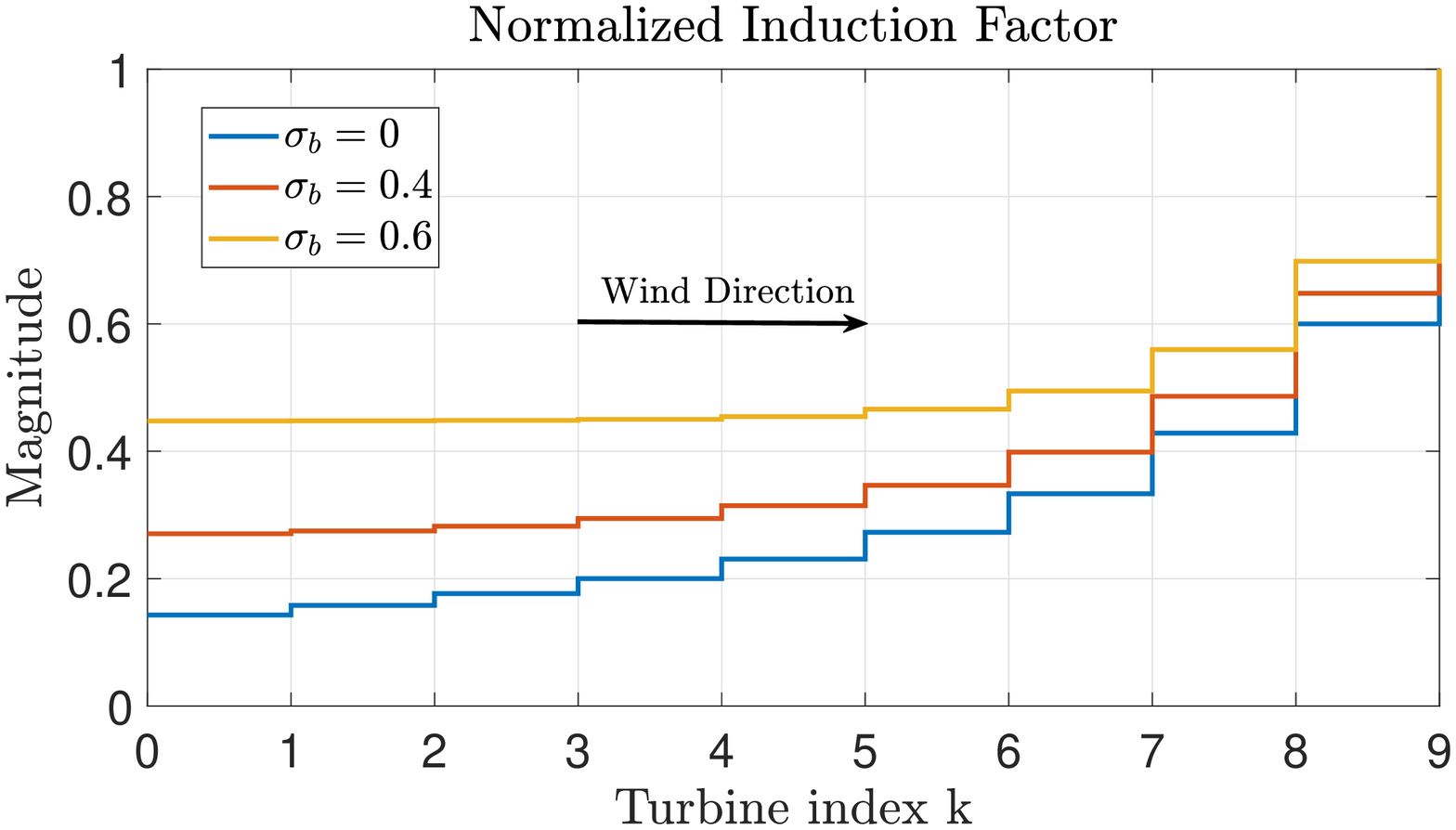}
\caption{Normalized induction factors defined as $\frac{\psi_k}{1/3}$ for deterministic model ($\mu_a = 1, \mu_b = -2$) and stochastic model with various values of {input-dependent} multiplicative noise standard deviation ($\mu_a = 1, \sigma_a = 0, \mu_b = -2, \sigma_b > 0, \gamma_a = 0$ and $\gamma_b = 0$).}
\end{figure}

\begin{figure}[!htbp]
\centering\label{fig:efficiency_vb}
\includegraphics[scale=0.37]{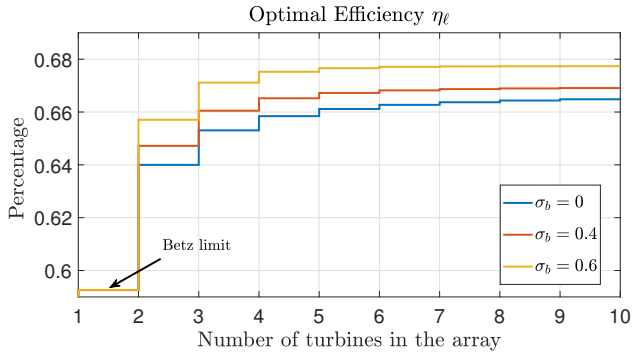}
\caption{Comparison of optimal efficiency $\eta_\ell$ for deterministic model ($\mu_a = 1, \mu_b = -2$) and stochastic model with various values of {input-dependent} multiplicative noise standard deviation ($\mu_a = 1, \sigma_a = 0, \mu_b = -2, \sigma_b > 0, \gamma_a = 0$ and $\gamma_b = 0$).}
\end{figure}

Fig. 3 illustrates the optimal induction factor sequence $\{\psi_0,\ldots,\psi_9\}$ and Fig. 4 depicts the optimal efficiency $\eta_\ell^*$ under different standard deviation values of the \emph{input-dependent} multiplicative noise $b_k$. The induction factors in Fig.~3 are normalized by $1/3$, which is the value achieving the Betz limit for a single isolated turbine \cite{burton2001handbook}. We set the mean value $\mu_a$ to 1, and the skewness to zero. Fig.~4 demonstrates that the optimal array efficiency improves with increasing variance on $b_k$. This result is intuitively reasonable, in the sense that higher variability of the velocity deficits in the far wake may lead to increased power extraction. We speculate that this multiplicative stochastic perturbation on the velocity deficit may provide a mathematically simple way of capturing physical phenomena such as mixing or entrainment, which are known to promote energy extraction \cite{verhulst2014large,santoni2015development} Note also that the case $\sigma_b=0$ reproduces the results for the deterministic ADM model in \cite{rotea2014dynamic}. It should be noted that as the standard deviation $\sigma_b$ is increased, the induction factors increase. That is, the leading upstream turbines are working more as the multiplicative noise is increased; which again is consistent with the conventional wisdom that the more turbulent the wind is the closer the turbines should be to their isolated optimum set point \cite{ciri2017large} 


Figures~5 and~6 provide the optimal induction factor sequence and efficiency under different standard deviation values of the \emph{state-dependent} noise on $a_k$, and without input-dependent noise (i.e., $b_k$ is fixed and constant for all $k$). Both figures demonstrate that the optimal induction factor sequence from the stochastic actuator disk model also increases the efficiency and improves performance under larger variations. 


To match the expected wind velocity of the conventional deterministic ADM, the mean value of the \emph{state-dependent} noise should be set to unity (i.e., $\mu_a = 1$) \cite{rotea2014dynamic}. However, having $\mu_a = 1$ together with a non-zero variance in the state-dependent multiplicative noise $a_k$ leads to null optimal induction factors for leading upstream turbines, since in this case the model essentially predicts that additional energy will be injected into the wake further downstream. This indicates that the parameters in the stochastic ADM should be carefully calibrated based on measured data in order to capture appropriate (possibly heterogeneous) spatio-temporal flow variations and obtain reasonable control policies for the array. To appropriately incorporate stochasticity of the wind flow, we set the mean value of $a_k$ to $\mu_a = 0.99$, and vary the standard deviation $\sigma_a$ to describe statistical fluctuations. 
The key observation is that regardless of the value of $\mu_a$, the proposed approach improves efficiency with increasing variance by exploiting statistical knowledge of wind field fluctuations and incorporating this information into optimal control policies for wind farm power maximization.

\color{black}

\begin{figure}[!htbp]
\centering\label{fig:magnitude_va}
\includegraphics[scale=0.38]{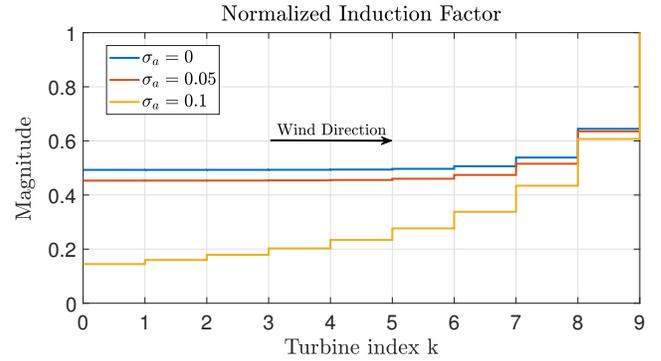}
\caption{Normalized induction factor defined as $\frac{\psi_k}{1/3}$ for deterministic model ($\mu_a = 0.99, \mu_b = -2$) and for stochastic model with various values of state-dependent multiplicative noise standard deviation ($\mu_a = 0.99, \sigma_a > 0, \mu_b = -2,  \sigma_b = 0, \gamma_a = 0$ and $\gamma_b = 0$).}
\end{figure}
\begin{figure}[!htbp]
\centering\label{fig:efficiency_va}
\includegraphics[scale=0.37]{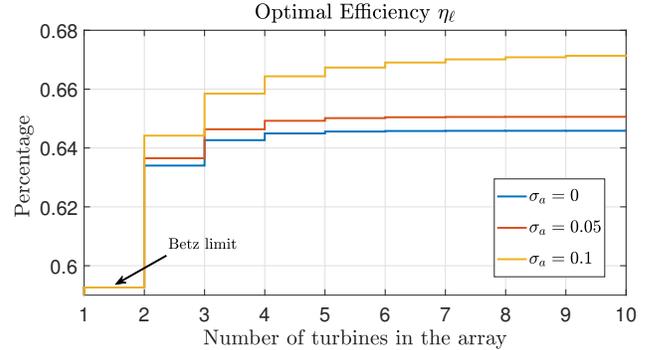}
\caption{Comparison of optimal efficiency $\eta_\ell$ for deterministic model ($\mu_a = 0.99, \mu_b = -2$) and stochastic model with various values of state-dependent multiplicative noise standard deviation ($\mu_a = 0.99, \sigma_a > 0, \mu_b = -2,  \sigma_b = 0, \gamma_a = 0$ and $\gamma_b = 0$).}
\end{figure}

The stochastic actuator disk model of a wind farm with cascaded wind turbines captures stochastic wind fluctuations. By definition, the optimal control laws derived from stochastic dynamic programming achieve superior performance to laws derived from a deterministic model of the same complexity, allowing the turbines to recognize and react to the particular wind field characteristics. Data derived directly from measurements or simulations can be incorporated directly into the control law to improve the aerodynamic efficiency a wind farm for specific wind fluctuation statistics. It is worth emphasizing that more work is necessary to incorporate tractable noise models that are consistent with the flow physics; this conference paper is a first step in this direction.

\section{Conclusions and Outlooks}
We have formulated a multi-stage stochastic optimal control problem for maximizing the power output of a one dimensional wind farm array and shown that it can be solved analytically via dynamic programming. The optimal control policies depend explicitly on the statistics of multiplicative noise, which can be related to stochastic wind fluctuations. 

        Our results provide an initial step toward defining a wind farm control strategy that tractably incorporates statistical knowledge of stochastic wind fluctuations. However, there remain several lines of future work that can extend the present results in various ways to more fully understand the possibilities and limits for maximizing annual energy production. Our future work will involve
    \begin{itemize}
    \item [(a)] utilizing more realistic wake models;
    \item [(b)] estimating necessary statistics from high-fidelity numerical simulations and experimental data;
    \item [(c)] performance evaluation of the policies on high-fidelity models, which may improve the results in \cite{santoni2015development};
    \item [(d)] considering more realistic array geometries;
    \item [(e)] exploring computationally efficient approximation of nonlinear optimal control strategies; if needed. 
    \end{itemize}
\bibliographystyle{ieeetr}  
\bibliography{refs} 

\end{document}